\documentclass[11pt,a4paper]{article}%
\DeclareMathAlphabet{\mathcal}{OMS}{cmsy}{m}{n}
\NeedsTeXFormat{LaTeX2e}
\usepackage[latin1]{inputenc}
\usepackage[T1]{fontenc}
\usepackage[english]{babel}

\usepackage{comment}
\usepackage{graphicx}
\usepackage{caption}
\usepackage{subcaption}

\usepackage{indentfirst}
\usepackage[
allcolors=blue,colorlinks=true, pdfstartview=FitH, linkcolor=blue, pdftoolbar=true, bookmarks=true,bookmarksnumbered,plainpages,backref=true]{hyperref}
\usepackage[lmargin=3cm,tmargin=2.5cm,bmargin=2.5cm,rmargin=3cm]{geometry}
\usepackage{amsmath,amssymb,amsthm,amsfonts}
\usepackage{mathtools}
\usepackage{times}

\def\Xint#1{\mathchoice
	{\XXint\displaystyle\textstyle{#1}}%
	{\XXint\textstyle\scriptstyle{#1}}%
	{\XXint\scriptstyle\scriptscriptstyle{#1}}%
	{\XXint\scriptscriptstyle\scriptscriptstyle{#1}}%
	\!\int}

\def\XXint#1#2#3{{\setbox0=\hbox{$#1{#2#3}{\int}$}
		\vcenter{\hbox{$#2#3$}}\kern-.5\wd0}}

\setlength{\parskip}{0.5em}

\newcommand{\fin}{\hfill \ensuremath{\square}}
\newcommand{\Csharp}{{\settoheight{\dimen0}{C}\kern-.09em \resizebox{!}{\dimen0}{\raisebox{\depth}{$\sharp$}}}}

\newtheoremstyle{theorem}
{5pt +1\p@ -2.0\p@}
{5pt +1\p@ -2.0\p@}
{\it}			      
{}				  
{\bfseries}   
{.}               
{.4em}       
{}
\theoremstyle{theorem}
\newtheorem{theorem}{Theorem}[section]

\newtheorem{corollary}[theorem]{Corollary}
\newtheorem{lemma}[theorem]{Lemma}
\newtheorem{remark}[theorem]{Remark}
\numberwithin{equation}{section}

\usepackage[scaled=1.0576331,helvratio=0.9160502]{newtxtext}
\usepackage{amsmath,amssymb,amsthm,amsfonts,mathtools}
\usepackage{mathptmx}
\usepackage{dsfont} 
\usepackage{bbm}
\usepackage{pifont}

\usepackage{enumitem}



\begin{document}
\title{\Large{Nonlinear boundary problem for Harmonic functions in higher dimensional Euclidean half-spaces}}
\author{{Marcelo F. de Almeida}{\thanks{%
			de Almeida, M.F. was supported by CNPq:409306/2016, Brazil.}} \\
	{\small Universidade Federal de Sergipe, Departamento de Matem\'atica,} \\
	{\small {\ CEP 49100-000, Aracaju-SE, Brazil.}}\\
	{\small \texttt{Email:marcelo@mat.ufs.br}}\vspace{0.5cm}\\
	{Lidiane S. M. Lima}{\thanks{%
			Lima, L.S.M. was supported by CNPq:409306/2016, Brazil (Corresponding author).}}\\
	{\small Universidade Federal de Goi\'as, IME - Departamento de Matem\'atica,}
	\\
	{\small CEP 740001-970, Goiania-GO, Brazil.} \\
	{\small \texttt{Email:lidianesantos@ufg.br}}}
\date{}
\maketitle
\begin{abstract}
In this paper we are interested on solvability of the problem 
\begin{align*}
\begin{cases}
-\Delta u=0 & \text{in} \;\;\;\mathbb{R}^{n+1}_{+}\;\;\;\;\;\;\;\;\;\\
\;\;\displaystyle{\frac{\partial u}{\partial \nu}} = V(x)u+b \vert u\vert^{\rho-1}u+f \; & \text{on} \;\;\partial\mathbb{R}^{n+1}_+\;\;\;\;\;\;\;\;\,\,
\end{cases}
\end{align*}
with high singular data $f$ and potential $V$ on boundary $\partial\mathbb{R}^{n+1}_+$ of half-space $ \mathbb{R}^{n+1}_{+}=\{(x,t)\in\mathbb{R}^{n+1}\,\vert\, t>0\}$ for $n\geq 2$. More precisely, inspired at \cite{deAlmeida1} and \cite{Quittner} we introduce a new functional space based in weak-Morrey spaces and we shown  existence of positive solutions $u$ to the above problem when inhomogeneous term   $f\in\text{weak-}\mathcal{M}_{p}^{n{(\rho-1)}/{\rho}}(\mathbb{R}^{n})$  and potential $V\in   \text{week-}\mathcal{M}^{n}_{\ell}(\mathbb{R}^{n})$ are sufficiently small in the natural $n/(n-1)<\rho<\infty$. Our theorems recover the   range $(n+1)/(n-1)\leq \rho<\infty$  and immediately imply in solvability of the equivalent nonlocal half-Laplacian problem  $(-\Delta)^{{1}/{2}}u=Vu+b\vert u\vert^{\rho-1}u+ f (x)$ for $f$ and potential $V$ rough than previous ones, in view of strictly inclusions  $L^\lambda \varsubsetneq\mathcal{M}^{\lambda}_{p} \varsubsetneq \text{week-}\mathcal{M}^{\lambda}_{p}$ for $1<p<\lambda<\infty$. Also,  from Campanato's lemma we conclude that $u\in C^{0,\alpha}_{loc}( \overline{\mathbb{R}^{n+1}_+})$ is locally H\"older continuous, for  $f\in\mathcal{M}_{p}^{n{(\rho-1)}/{\rho}}(\mathbb{R}^{n})$ and $V\in \mathcal{M}^{n}_{\ell}(\mathbb{R}^{n})$ in Morrey spaces.

\bigskip\noindent\textbf{AMS MSC:} 49J52, 35J05, 35J65, 35J67

\medskip\noindent\textbf{Keywords:} Morrey spaces, harmonic function, Neumann boundary, layer potentials. 
\end{abstract}


\setcounter{equation}{0}\setcounter{theorem}{0}

\section{Introduction}
In this paper we are interested in well-posedness of the nonlinear boundary problem
\begin{align}\label{Neumann}
\begin{cases}
\Delta u=0 & \text{in} \;\;\;\mathbb{R}^{n+1}_{+}\\
\displaystyle{\partial_\nu u} = V(x)u+b \vert u\vert^{\rho-1}u+f \; & \text{on} \;\;\partial\mathbb{R}^{n+1}_+,
\end{cases}
\end{align}
when the force $f$ and potential  $V$ are taken in weak-Morrey spaces or Morrey spaces on boundary $\partial \mathbb{R}^{n+1}_{+}$ of half-space $ \mathbb{R}^{n+1}_{+}=\{(x,t)\in\mathbb{R}^{n+1}\,\vert\, t>0\}$, for all $n/(n-1)<\rho<\infty$ with $n\geq 2$ and $b\geq 0$. Here  ${\partial_\nu u}=-\partial_{t} u$ denotes the exterior normal derivative on boundary $\partial \mathbb{R}^{n+1}_{+}$. The linear problem associated to \eqref{Neumann}  read as follows
\begin{equation}\label{Linear}
\Delta u=0 \;\; \text{in} \;\;\;\mathbb{R}^{n+1}_{+} \;\;\text{ and } \;\;\partial_\nu u = V(x)u +f\;\;  \text{on}\;\;\partial\mathbb{R}^{n+1}_+.
\end{equation}
This problem has been studied in many papers (see e.g. \cite{Armitage, Gardiner}) and can be rewritten as a single layer potential
\begin{equation}\label{lay-pot}
u(x,t)=(\mathbf{S}\tilde{f})(x,t):=\frac{\Gamma((n+1)/2)}{\pi^{(n+1)/2}}\int_{\partial\mathbb{R}^{n+1}_+} \frac{1}{(|x-y|^2+t^2)^{\frac{n-1}{2}}}\tilde{f}(y)dy
\end{equation}
for  $\tilde{f}(y)=V(y)u +f(y)$ such that $\int_{\mathbb{R}^n}(1+|y|^2)^{-\frac{n}{2}}\vert \tilde{f}(y)\vert dy<\infty$. The last integral can be  pointwise controled by fractional Hardy-Littlewood maximal function
\begin{equation}
(M_{\alpha}\tilde{f})(y_0)\,=\sup_{r>0,\,y_0\in\,\mathbb{R}^{n}} r^{\alpha-n} \int_{B(y_0,r)}\vert \tilde{f}(y)\vert dy,\quad 0<\alpha<n\nonumber 
\end{equation}
since, in view of $(1+|y-y_0|)^2\leq (1+|y_0|)^2\,(1+|y|^2)$, we have 
\begin{align}
\int_{\mathbb{R}^{n}}\frac{|\tilde{f}(y)|}{(1+|y|^2)^{\frac{n}{2}}}dy&\lesssim \int_{\mathbb{R}^{n}}\frac{|\tilde{f}(y)|}{1+|y-y_0|^{n}}dy \nonumber\\
&=\int_{B(y_0,1)}\frac{\vert \tilde{f}(y)\vert}{1+\vert y-y_0\vert^{n}}dy +\sum_{k=1}^{\infty}\int_{\{y\;\vert\; 2^{k-1}\leq \vert y-y_0\vert < 2^k\}}\frac{|\tilde{f}(y)|}{1+|y-y_0|^{n}}dy\nonumber\\
&\lesssim \frac{1}{\vert B(y_0,1)\vert }\int_{B(y_0,1)}|\tilde{f}(y)|dy + 2^{n}\sum_{k=1}^{\infty} 2^{-k\alpha} \,2^{k(\alpha -n)}\int_{B(y_0,2^k)}|\tilde{f}(y)|dy\nonumber\\
&\lesssim (M_0\tilde{f})(y_0)+(M_{\alpha}\tilde{f})(y_0)\quad \text{ for } \quad\alpha>0\nonumber.
\end{align}
Hence, to study \eqref{Neumann} and  \eqref{Linear} with external force $f$ and singular potential $V$ in  rough function spaces it seems interesting verify the continuously of  maximal operators $M_{\alpha}$ in order to control the boundary data on rough spaces. This was the first motivation to study these problems with $f$ in weak-Morrey space $\mathcal{M}^\omega_{p\infty}(\mathbb{R}^{n})$ and $V\in\mathcal{M}^{n}_{\ell\infty}(\mathbb{R}^{n})$ in view of continuously  of $M_{\alpha}$ in Morrey-Lorentz spaces (see \cite[Corollary 1.3]{Marcelo-Lidiane}). 
The second motivation is that \eqref{Neumann} 
can be seen as a local realisation of square root of Laplacian (or half-Laplacian) problem 
\begin{align}\label{nonlocal1}
(-\Delta)^{{1}/{2}}v(x)=V(x)v+b\vert v(x)\vert^{\rho-1}v(x)+ f (x)\quad \text{ in }\quad \mathbb{R}^{n},\quad b>0
\end{align}
via Dirichlet to Neumann map  $T:v\mapsto \textnormal{tr}_{\partial\mathbb{R}^{n+1}_+}\partial_{\nu} u$, where $u(x,t)=(P_{t}\ast v)(x)$ is the harmonic extension of  $v$  and $P_t$ denotes the Poisson kernel on half-space $\mathbb{R}^{n+1}_+$
\begin{equation}
P_t(x)=\frac{\Gamma((n+1)/2)}{\pi^{(n+1)/2}}\frac{t}{(|x|^2+t^2)^{(n+1)/2}}\nonumber\quad x\in \mathbb{R}^n,\; t>0.
\end{equation}
Indeed, the Dirichlet to Neumann map $T$ is the half-Laplacian  $(-\Delta_x)^{1/2}$ in $\mathbb{R}^{n}$  since from $\widehat{P_t}(\xi)=\exp\{-t\vert\xi\vert\}$ we have
\begin{equation}
\textnormal{tr}_{\partial\mathbb{R}^{n+1}_+} (-\partial_{t} u){\,\widehat{}}\,(\xi,t)= \textnormal{tr}_{\partial\mathbb{R}^{n+1}_+} [-\partial_{t} {\,\widehat{u}}\,(\xi,t)]=\left(\vert\xi\vert e^{-t\vert\xi\vert}\widehat{v}(\xi)\right)\Big\lvert_{t=0} =\vert\xi\vert \widehat{v}(\xi)=[(-\Delta_x)^{{1}/{2}} v ]{\,\widehat{}}\,(\xi)\nonumber,
\end{equation}
see Caffarelli-Silvestre \cite{CF} for generalization. Thus, if $u(x,t)$ is a solution of \eqref{Neumann} clearly its trace $v(x):= \textnormal{tr}_{\partial\mathbb{R}^{n+1}_+} u(x,t)$  $a.e.$ in $\mathbb{R}^n$ is a solution of \eqref{nonlocal1}. 
Therefore, to show existence of solutions to \eqref{nonlocal1} via its local realisation with inhomegeneous term $f$ in weak-Morrey spaces we need consider a Banach space $X_{r,q}^{1,s_r,s_q}(\Omega)$ with suitable information on the integrability 
of trace of  $u$ on boundary $\partial\Omega$ and integrability of  $\vert \nabla u \vert$ on $\Omega\subset \mathbb{R}^{n+1}$. This Banach space will be chosen as a substitute for Sobolev type spaces $W^1X_r^{s_r}(\Omega)$ 
in domains $\Omega$ with regular or irregular boundary for which is not known  characterizations of trace space on $\partial\Omega$ for functions $u$ such that $ \Vert \nabla u\Vert_{X_{r}^{s_r}(\Omega)}<\infty$. For instance, consider the special situation where $X_{r,q}^{1,s_r,s_q}(\Omega)$ is given by 
\begin{equation}\nonumber
	\Vert u\Vert_{X_{r,q}^{1}(\Omega)}= \Vert \nabla u\Vert_{L^r(\Omega)} +  \Vert \textnormal{tr}_{\partial\Omega}u\Vert_{L^{q}(\partial\Omega)}, \quad q=r{^\sharp}={rn}/{[(n+1)-r]}
\end{equation}
for any open set $\Omega$ in $\mathbb{R}^{n+1}$ endowed by a measure $\mu$ satisfying $ \mu (B_r(x)\cap \Omega)\leq c\,r^{\alpha}\,$ without any regularity condition on boundary $\partial\Omega$. 
Maz'ya and Cianchi \cite[Section 2]{Mazya2} shown, in particular, there exists positive constants $c_j$ such that 
\begin{equation}\label{Mazya}
	\Vert u\Vert_{L^{r^{\ast}_{\alpha}}(\Omega)}\leq c_1 \Vert \nabla u\Vert_{L^r(\Omega)} +  c_2\Vert \textnormal{tr}_{\partial\Omega}u\Vert_{L^{r^\sharp}(\partial\Omega)},\quad r^\ast_{\alpha}={r\alpha}/{[(n+1)-r]}
\end{equation}
for every continuous functions on $\overline{\Omega}$ such that $u\in W^{1,r}(\Omega)$, for all $1<r<n+1$  and $\alpha\in (n,n+1]$.  Maz'ya and Cianchi \cite[Theorem 6.1]{Mazya2} established \eqref{Mazya} for higher order derivative in rearrangement-invariant spaces for which its representation on $(0,\infty)$ 
satisfies one-dimensional Hardy type inequalities \cite[Section 1.3.2]{Mazya1}. 
The ``Sobolev trace type''  inequality  \eqref{Mazya} 
was first established by Maz'ya \cite{Mazya3} via isoperimetric inequalities, where he also exhibited optimal constants as  $r=1$. In \cite{Maggi1,Maggi2} employing optimal transportation theory the authors exhibited  optimal constants to  \eqref{Mazya} for all $1<r\leq n+1$ when $\Omega$ is locally Lipschitz open domain. As we can see, the Banach space $X_{r,q}^{1,s_r,s_q}(\Omega)$ provide a good substitute for Sobolev spaces  $W^1X_r^{s_r}(\Omega)$ when the trace space  $TW^1X_r^{s_r}(\Omega)$ is not known for rearrangement-invariant spaces $X_r^{s_r}(\Omega)$ in irregular or regular domains (see also \cite[p.1912]{Ferreira3} for a discussion). The lack of  boundary trace in Morrey or Morrey-Lorentz spaces (see also \cite{deAlmeida1} in weak-$L^p(\mathbb{R}^n)$  and \cite{Quittner} in  $L^1(\Omega)$) 
inspired us  to  study solvability of \eqref{Neumann} in the space   ${X}^{1,s_r,s_q}_{r,q}(\mathbb{R}^{n+1}_{+})$  of locally integrable functions $u\in L^1_{loc}(\overline{\mathbb{R}^{n+1}_+})$ such that 
\begin{equation}\label{Functional-space}
\Vert u \Vert_{{X}^{1,s_r,s_q}_{rq}(\mathbb{R}^{n+1}_{+})}=\Vert\, \nabla u \, \Vert_{\mathcal{M}^{\mu}_{r,s_r}(\mathbb{R}^{n+1}_+)}+\Vert \text{tr}_{\partial\mathbb{R}^{n+1}_+}u\Vert_{\mathcal{M}_{q,s_q}^{\lambda}(\partial \mathbb{R}^{n+1}_+)}<\infty,
\end{equation}
where $\mathcal{M}^\mu_{rs}(\mathbb{R}^{n+1}_+)$ denotes the Morrey-Lorentz space endowed by norm
\begin{equation}
	\Vert u \Vert_{\mathcal{M}^\mu_{rs}}=\sup_{R>0, (x,t)\in \mathbb{R}^{n+1}_+}\vert \mathbb{R}^{n+1}_+\cap B_{R}(x,t)\vert^{\frac{1}{\mu}-\frac{1}{r}}\Vert u\Vert_{L^{qs}(\mathbb{R}^{n+1}_+\cap B_{R})}\nonumber
\end{equation}
for every $1<r\leq\mu<\infty$ and $1\leq s\leq \infty$. Note that $\mathcal{M}^\mu_{rs}(\mathbb{R}^{n+1}_+)$ is  rearrangement-invariant, since   
$$\Vert \phi\Vert_{L^{qs}(\mathbb{R}^{n+1}_{+})} := \left\Vert t^{\frac{1}{q}-\frac{1}{s}}\phi_m^{\ast}(t)\right\Vert_{L^s(0,\infty)}
= \Vert \psi\Vert_{L^{qs}(\mathbb{R}^{n+1}_{+})}$$
whenever $\phi_m^{\ast}=\psi_m^{\ast}$, where $\phi_m^{\ast}(t)=\inf\{ s\geq 0\,:\, m(\{\vert \phi\vert >s\})\leq t \}$ for $t\in [0,\infty)$. Moreover,  ${X}^{1,s_r,s_q}_{r,q}(\mathbb{R}^{n+1}_{+})$ endowed by norm $\Vert\cdot\Vert_{{X}^{1,s_r,s_q}_{r,q}}$ is a Banach space. 
Our main result is addressed to solvability of  \eqref{Neumann} understood as an integral equation 
\begin{align}\nonumber
	u(x,t)&= c_n \int_{\partial\mathbb{R}^{n+1}_+}(\vert x-y\vert^2+t^2)^{\frac{n-1}{2}}(b \vert u\vert^{\rho-1}u+Vu+f)(y)dy.
\end{align}
\bigskip
\bigskip
\bigskip
\begin{theorem}[Main theorem]\label{existence} Let $1<p< \omega=n(\rho-1)/\rho$ for  $\frac{n}{n-1}<\rho<\infty$ and $n\geq2$. Let $b>0$ and $V\in \mathcal{M}_{\ell\infty}^{n}(\mathbb{R}^n)$ for $1<\ell< n$.
	\begin{itemize}
		\item[(I)] \textnormal{(Existence and uniqueness)} There are $\varepsilon>0$ and  $C>0$ such that if $\Vert f\Vert_{\mathcal{M}_{p\infty}^{\omega}(\mathbb{R}^n)}\leq \varepsilon/C$, the problem \eqref{Neumann} has a unique solution $u\in X_{rq}^{1,\infty,\infty}(\mathbb{R}^{n+1}_+)$ such that
	\begin{equation}
		\Vert \nabla u \Vert_{\mathcal{M}^{\mu}_{r\infty}(\mathbb{R}^{n+1}_+)}\leq c\prime\Vert f\Vert_{\mathcal{M}_{p\infty}^{\omega}(\mathbb{R}^{n})} \quad \text{ and }\quad \Vert \textnormal{tr}_{\partial\mathbb{R}^{n+1}_+}u\Vert_{\mathcal{M}_{q\infty}^{\lambda}(\partial \mathbb{R}^{n+1}_+)}\leq c\prime\Vert f\Vert_{\mathcal{M}_{p\infty}^{\omega}(\mathbb{R}^{n})}\nonumber
	\end{equation}
with exactly constant $c\prime= C\Vert f\Vert_{\mathcal{M}^{\omega}_{p\infty}(\mathbb{R}^n)}/\big(1-c\Vert V\Vert_{\mathcal{M}_{\ell \infty}^{n}(\mathbb{R}^n)}\big)>0$, for all $1<r<\mu<\lambda=n(\rho-1)$ and $\rho<q<\lambda$ such that $r/\mu\leq q/\lambda$ and $\rho/(\rho-1)=n/\mu+1/r$.

		\item[(II)] \textnormal{(Stability of data)}  The
		solution $u\in X_{rq}^{1,\infty,\infty}(\mathbb{R}^{n+1}_+)$ depends continuously of  singular function $f\in \mathcal{M}^{\omega}_{p\infty}(\mathbb{R}^n)$ and potential $V\in  \mathcal{M}_{\ell\infty}^{n}(\mathbb{R}^n)$.
	\end{itemize}	
\end{theorem}

Some comments are in order. Firstly, as we seen above from Dirichlet to Neumann map $v\mapsto \text{tr}_{\partial\mathbb{R}^{n+1}}\partial_{\nu}u$ we conclude that \eqref{Neumann} is the local realisation to half-Laplacian problem \eqref{nonlocal1} whose solutions  $v=\text{tr}_{\partial\mathbb{R}^{n+1}_+}u$ are trace on $\partial\mathbb{R}^{n+1}_+$ of solutions $u(x,t)$ of \eqref{Neumann}. Hence, we immediately get the following Corollary:
\begin{corollary}\label{nonlocal2}For $n\geq 2$ and $n/(n-1)<\rho<\infty$, let $1<p<\omega=n(\rho-1)/\rho$ and $V\in \mathcal{M}_{\ell\infty}^{n}(\mathbb{R}^n)$ for all $1<\ell<n$. There are $\varepsilon>0$ and  $C>0$ such that if $\Vert f\Vert_{\mathcal{M}_{p\infty}^{\omega}(\mathbb{R}^n)}\leq \varepsilon/C$, the half-Laplacian problem \eqref{nonlocal1} has a solution $v
	$  
such that
		\begin{equation}
		 \Vert v\Vert_{\mathcal{M}_{q\infty}^{\lambda}(\mathbb{R}^{n})}\leq c\prime\Vert f\Vert_{\mathcal{M}_{p\infty}^{\omega}(\mathbb{R}^{n})}\nonumber
		\end{equation}
		where $c\prime= C\Vert f\Vert_{\mathcal{M}^{\omega}_{p\infty}(\mathbb{R}^n)}/\big(1-c\Vert V\Vert_{\mathcal{M}_{\ell \infty}^{n}(\mathbb{R}^n)}\big)>0$, for all $1<r<\mu<\lambda=n(\rho-1)$ and $\rho<q<\lambda$ such that $r/\mu\leq q/\lambda$ and $\rho/(\rho-1)=n/\mu+1/r$.
\end{corollary}

The solvability of \eqref{nonlocal1} has been investigated \cite{Ferreira5}, in particular, when $f$ and $V$ satisfies the Fourier transform decay 
$$\vert \widehat{f}(\xi)\vert\leq C\varepsilon \vert \xi\vert^{-[(n-1)-1/(\rho-1)]} \quad \text{and}\quad \vert \widehat{V}(\xi)\vert \leq L^{-1}\vert\xi\vert^{-(n-1)}$$
whenever $\rho\in\mathbb{N}$ and $\rho>n/(n-1)$ for $\varepsilon>0$ sufficiently small.  According to \cite[Remark 2.1]{deAlmeida2} there is \ $h\in \mathcal{M}_q^{\lambda}(\mathbb{R}^n)\subset \mathcal{M}^\omega_{p\infty}(\mathbb{R}^n)$ such that $ \sup_{\xi\in \mathbb{R}^n}\vert \xi\vert^{n(1-\frac{1}{\lambda})}\vert \widehat{h}(\xi)\vert = \infty $ whenever $1\leq q<\lambda<\infty$.  Here $\mathcal{M}_p^{\lambda}(\mathbb{R}^n)$ denotes the Morrey space defined by  $$\mathcal{M}_q^{\lambda}(\mathbb{R}^n)=\Big\{ g\in L^1_{loc}(\mathbb{R}^n)\,:\,  \int_{B_R(x)}\vert g\vert^q dx\leq c\, R^{-n({q}/{\lambda}-1)} \Big\}.$$
Hence, the inhomogeneous term $f$ and potential $V$ are rougher than those taken in pseudo-measure spaces  \cite{Ferreira5}.
The Corollary \ref{nonlocal2} provide a new class of functions $f\in \mathcal{M}_{p\infty}^{n{(\rho-1)}/{\rho}}(\mathbb{R}^{n})$ and $V\in \mathcal{M}_{\ell \infty}^{n}(\mathbb{R}^{n})$ for solvability of \eqref{nonlocal1}. 
Let us remark that  $\rho>0$  in Corollary \ref{nonlocal2} is a real number, but in \cite[Theorem 1.1]{Ferreira5} is integer due to Fourier approach. This technique was also employed in \cite{Lucas-Nestor} for nonlinearity with derivatives.  The results listed above does not cover the range $1<\rho\leq {n}/{(n-1)}$ which is expected for nonexistence of positive solutions as we will mention below. For $1\leq \rho<\frac{n+1}{n-1}$ is well known from \cite{Hu,Li-Zhang, Ou} there exists no solutions to the problem 
\begin{align}\label{Neumann2}
	\begin{cases}
		\Delta u=0 & \text{in} \;\;\;\mathbb{R}^{n+1}_{+},\\
		{\partial_{\nu} u}=bu^{\rho} \; & \text{on} \;\;\partial\mathbb{R}^{n+1}_+\\
		u>0 & \text{in} \;\;\;\mathbb{R}^{n+1}_{+},\\
	\end{cases}
\end{align}
or its equivalent nonlocal version $(-\Delta)^{\frac{1}{2}}v=bv^{\rho},\;\; v>0$ in $\mathbb{R}^{n}$, for all $n\geq2$.  However, if we add a potential $V$ on boundary, namely, $\partial_{\nu}u =u^{\rho}+V(x)u$ on $\partial\mathbb{R}^{n+1}_+$ then the problem \eqref{Neumann2} has at least a solution for $1< \rho<\frac{n+1}{n-1}$  and $V(x)=-1$. In other words,  the addiction of a potential  $V$ on boundary $\partial\mathbb{R}^{n+1}_+$ interfere the existence of positive solutions as was observed by Abreu, do Ó and Medeiros in \cite{Abreu} and Quittner-Reichel \cite[Theorem 17]{Quittner} for bounded domain $\Omega\subset \mathbb{R}^{n+1}_+$. This also happen for $\rho=\frac{n+1}{n-1}$ and $V(x)=-\alpha/\vert x\vert^\ell$, the authors \cite{Ferreira2} shown that existence or nonexistence of positive solutions is linked to homogeneity degree $\ell$ of $V(x)=-\alpha/\vert x\vert^\ell$, namely, $\ell=1$ or $\ell\neq1$ respectively. This is remarkable, since for  $\rho=(n+1)/(n-1)$ all solutions of \eqref{Neumann2} has the form (see \cite{Ou, Li-Zhang})  
\begin{equation}\label{positive-sol}
u(x,t)=\left(\frac{\varepsilon}{\vert x-\bar{x}\vert^{2}+\vert t-\bar{t}\vert^2}\right)^{\frac{n-1}{2}} \;\text{ where }\;\;  \varepsilon=\frac{-(n-1)\bar{t}}{b}\; \text{ for }\; b>0 \;\text{ and }\;(\bar{x},\bar{t})\in\mathbb{R}^{n+1}_{-}.\nonumber
\end{equation}
In our Theorem \ref{existence} or Corollary \ref{nonlocal2} we can take potential $V(x)=a(\frac{x}{\vert x\vert})\frac{c}{\vert x\vert}\in\mathcal{M}_{\ell}^{n}(\mathbb{R}^n)$ with $a\in L^{\infty}(\mathbb{S}^{n-1})$ nonnegative and  $c\geq 0$. Hence, we shown existence of positive solutions to \eqref{Neumann} in the range $n/(n-1)<\rho<\infty$ when 
\begin{equation}
(\mathbf{S}f)(x,t)=\int_{\partial \mathbb{R}^{n+1}_{+}}G(x-y,t)f(y)dy>0 \quad  \text{ in } \quad \mathbb{R}^{n+1}_{+}.\nonumber
\end{equation}
More precisely, we obtain the following theorem.
\begin{theorem}\label{Symmetries}Let  $\mathcal{D}\subset \mathbb{R}
	^{n}$ be a positive-measure set and let $f$ and  $V$ be according to Theorem \ref{existence}. If $f$ and  $V$ are nonnegative on $\partial\mathbb{R}
		^{n+1}_{+}$  and $f$ is positive on $\mathcal{D}$, then \eqref{Neumann} or \eqref{nonlocal1} has a positive solution.
\end{theorem}
The range ${n}/{(n-1)}<\rho<\infty$ and size control $\Vert f\Vert_{\mathcal{M}_{p\infty}^{n(\rho-1)/\rho}}$ is expected to show existence of positive solutions to \eqref{Neumann}. Indeed, according to Bernand \cite{Bernard} the  inhomogeneous equation 
\begin{align}\label{Bernard}
	\begin{cases}
		\Delta u + u^p +f(x)=0  &\text{in} \quad \mathbb{R}^n\\
		u>0 &\text{in} \quad \mathbb{R}^n,\;n\geq 3
	\end{cases}
\end{align}
has a positive solution, if $f\in C^{0,\gamma}(\mathbb{R}^n)$ for  $0<\gamma\leq 1$ and satisfying 
	\begin{align}
		0\leq f(x)\leq L_0/ (1+\vert x\vert^2)^{p/(p-1)}\quad \text{ and } f\not\equiv 0\nonumber
	\end{align}
 whenever  $L_0 =(p-1)\left(\frac{1}{p }\right)^{p/(p-1)}\left[\frac{2}{p-1}\left(n-2-\frac{2}{p-1}\right)\right]^{p/(p-1)}$ for every  $p>n/(n-2)$. However, when  $f(x)>L_0\vert x\vert^{-2p/(p-1)}$ as $\vert x\vert\rightarrow \infty$ he shows nonexistence of positive solutions for every $p>n/(n-2)$.  In the range $1<p\leq n/(n-2)$, he also shows (see \cite[Theorem 5]{Bernard}) that \eqref{Bernard} has no solutions. 
Hence, the exponent $p=n/(n-2)$ and optimal constant $L_0$ are critical numbers for existence or nonexistence of  positive solutions to \eqref{Bernard}. The  critically of  $p=n/(n-2)$ was also observed by Zhang \cite{Zhang} when $0\leq f(x)\leq \frac{\varepsilon}{1+\vert x-x_0\vert^{n+\delta}}$ for some $\delta>0$ and sufficiently small $\varepsilon>0$.  For $p>n/(n-1)$ and bounded smooth domain $\Omega\subset\mathbb{R}^{n+1}_+$, Quittner-Reichel \cite[Theorem 12]{Quittner} shown that 
\begin{align}\label{Q-R}
		-\Delta u=0  \;\;\text{in} \;\;\;\Omega,\quad 
		{\partial_{\nu} u}=u^{p}+f(x) \;\;  \text{on} \;\;\partial\Omega
\end{align}  
has a positive unbounded solution $u\in L^1(\Omega)\times L^1(\partial\Omega)$, for all $f\in L^{\infty}(\partial\Omega)$. The solvability of  \eqref{Q-R} was investigated in $\mathbb{R}^{n+1}_+$ by Ferreira-Medeiros-Montenegro \cite{Ferreira4} when  $\Vert f\Vert_{L^{n(\rho-1)/\rho}(\mathbb{R}^{n})}$ is sufficiently small and $\rho>n/(n-1)$. Let us mention that 
\begin{equation}
L^{n(\rho-1)/\rho}(\mathbb{R}^{n}) \varsubsetneq\mathcal{M}^{n(\rho-1)/\rho}_{p}(\mathbb{R}^{n}) \varsubsetneq \mathcal{M}^{n(\rho-1)/\rho}_{p\infty}(\mathbb{R}^{n}).
\end{equation}
The solvability of  \eqref{Q-R} was also investigated by Ferreira-Montenegro \cite{Ferreira5} when $f$ satisfy the Fourier transform decay $\vert \widehat{f}(\xi)\vert\leq C\varepsilon \vert \xi\vert^{-[(n-1)-1/(\rho-1)]}$, since  \eqref{Q-R} in $\Omega=\mathbb{R}^{n+1}_+$  is equivalent to \eqref{nonlocal1} with  $V\equiv 0$.  These comments leads to \textit{conjecture}: Let $V\equiv 0$, does not exists positive solutions to \eqref{Neumann} or is its equivalent nonlocal version 
\begin{align}
	\begin{cases}
		(-\Delta)^{1/2}u= bu^\rho+f  &\text{in} \quad \mathbb{R}^n\\
		u>0 &\text{in} \quad \mathbb{R}^n,\;n\geq 2
	\end{cases}
\end{align}
when $1<\rho \leq n/(n-1)$ and $0\not\equiv f\in C^{0,\gamma}(\mathbb{R}^n)$ satisfies  $0\leq f(x)\leq K_0 (1+\vert x\vert^2)^{-\rho/2(\rho-1)} $ or when $ f(x)>O(K_0 \vert x\vert ^{-\rho/(\rho-1)})$ for all $\rho>n/(n-1)$. 
This conjecture was partially solved by Zhang-Wang \cite{Zhang-Wang} for nonlocal Lane-Emden equation 
\begin{align}\label{Bernard-nonlocal}
	\begin{cases}
		(-\Delta)^{s} u = u^p +f(x)  &\text{in} \quad \mathbb{R}^n\\
		u>0 &\text{in} \quad \mathbb{R}^n,\;n\geq 3
	\end{cases}
\end{align}
for all $0<s<1$ as $1<p< n/(n-2s)$ and $0\leq f(x)\leq \frac{\varepsilon}{1+\vert x-x_0\vert^{n+\delta}}$ where $\delta>0$ satisfies $n+\delta=(n-2s)(p-1)$ and  $\varepsilon>0$ is sufficiently small. In the case $p=n/(n-2s)$ they shown nonexistence of positive solutions only for $0<s<1/2$. Note that $n+\delta=(n-2s)(p-1)$ implies $1/(1+\vert x\vert^2)^{\frac{sp}{p-1}}\leq C(x_0)\frac{1}{1+\vert x-x_0\vert^{n+\delta}}$ and we can consider $0\leq f(x)\leq \varepsilon/(1+\vert x\vert^2)^{\frac{sp}{p-1}}$  in \cite{Zhang-Wang}. The case $1/2\leq s<1$ for $p=n/(n-2s)$ is still unsolved and optimal constant $K_0$ is still unknown. As we saw above, the size condition for $f$ and range $\rho\in \big(\frac{n}{n-1}, \infty)$  is expected to show existence of positive solutions.  Note that in Theorem  \ref{Symmetries} we can choose nonnegative singular  functions at zero, namely, 
\begin{align*}
f(x) = \varepsilon/\vert x\vert^{\frac{\rho}{\rho -1}}\quad \text{ and } \quad V(x)=c/\vert x\vert 
\end{align*}
for $\varepsilon>0$ and $c>0$ sufficiently small. 


\bigskip 

Our second comment is about locally H\"older regularity of solutions. Let us mention that Hardy-Littlewood-Sobolev Theorem  is known in Morrey spaces (see Olsen \cite{Olsen}), then our fundamental Theorem \ref{linear} (for boundary layers potentials) is true in Morrey spaces.  Hence, mimicking the proof of Theorem \ref{existence} in the Banach space  ${X}^{1}_{rq}(\mathbb{R}^{n+1}_{+}):={X}^{1rq}_{rq}(\mathbb{R}^{n+1}_{+})$ endowed by norm  $$\Vert u \Vert_{{X}^{1}_{rq}(\mathbb{R}^{n+1}_+)}=\Vert\, \nabla u \, \Vert_{\mathcal{M}^{\mu}_{r}(\mathbb{R}^{n+1}_+)}+\Vert \textnormal{tr}_{\partial\mathbb{R}^{n+1}_+}u\Vert_{\mathcal{M}_{q}^{\lambda}(\partial \mathbb{R}^{n+1}_+)}$$ 
and choosing  $f\in \mathcal{M}_p^{n{(\rho-1)}/{\rho}}(\mathbb{R}^{n})$ and  potential $V \in \mathcal{M}_\ell^{n}(\mathbb{R}^{n})$  in Morrey space
$$\mathcal{M}_p^{\lambda}(\mathbb{R}^n)=\Big\{ g\in L^1_{loc}(\mathbb{R}^n)\,:\,  \int_{B_R(x)}\vert g\vert^p dx\leq c\, R^{-n({p}/{\lambda}-1)} \Big\}$$
we will conclude that \eqref{Neumann} has a solution $u\in {X}^{1}_{rq}(\mathbb{R}^{n+1}_{+})$ such that 
\begin{equation}\label{key43}
			\int_{B_R(x,t)\cap \mathbb{R}^{n+1}_+}\vert \nabla u(y,s)\vert^{r}dyds\leq c\prime\, R^{-(n+1)({r}/{\mu}-1)} 
\end{equation}
where the constant $c\prime>0$ only depends of  $C\Vert f\Vert_{\mathcal{M}_p^{\omega}(\mathbb{R}^{n})}$ and $c\Vert V\Vert_{\mathcal{M}_\ell^{n}(\mathbb{R}^{n})}$. The  previous estimate (\ref{key43}) and Poincare's inequality yields 
\begin{equation}
			\int_{B_R(x,t)\cap \mathbb{R}^{n+1}_+}\vert u- u_R\vert^{r}dyds \lesssim R^r\int_{B_R(x,t)\cap \mathbb{R}^{n+1}_+}\vert \nabla u\vert^{r}dyds\lesssim R^{r(1-\frac{n+1}{\mu})+(n+1)} = 
			c\, R^{r\alpha +(n+1)}\nonumber,
\end{equation}
where $\alpha =1-(n+1)/\mu$ and $u_{R}$ denotes the average $u{_{R}}=\Xint{-}_{_{\widetilde{B}_{R}}}u:=\frac{1}{\vert \widetilde{B}_{R}\vert}\int_{_{\widetilde{B}_{R}}}u$ with   $\widetilde{B}_{R}(x,t)={B}_{R}(x,t)\cap\mathbb{R}^{n+1}_+$. Then $u\in \mathcal{E}^{\alpha,r}(\mathbb{R}^{n+1}_+)$, the Campanato space defined by
		\begin{equation}
			\Vert u\Vert_{\mathcal{E}^{\alpha,r}(\mathbb{R}^{n+1}_+)}=\sup_{\widetilde{B}_{R}}R^{-\alpha}\left(\Xint{-}_{\widetilde{B}_{R}}\vert u-u_{R}\vert^r dyds\right)^{1/r}\nonumber.
		\end{equation} 
Hence, from Campanato lemma (see \cite{Campanato, Meyers} for bounded domains and \cite[p.170]{Fabes} for whole space) 
we conclude that $u\in C^{0,\alpha}_{loc}(\overline{\mathbb{R}^{n+1}_+})$ is locally-H\"older continuous with exponent $\alpha =1-(n+1)/\mu$ and $\mu>{n+1}$. 
This is  the issue of the following Corollary.

\begin{corollary}\label{local2} Let  $\rho>n/(n-1)$ such that $n(\rho-1)=\lambda>\mu>n+1$. Under assumptions of  Theorem \ref{existence}, let $V\in\mathcal{M}_\ell^n(\mathbb{R}^n)$ and $f\in\mathcal{M}_p^{\frac{n(\rho-1)}{\rho}}(\mathbb{R}^n)$ for $1<\ell<n$ and $1<p<\frac{n(\rho-1)}{\rho}$.
\begin{itemize}
\item[(A)] There are $\varepsilon>0$ and  $c_k>0$ such that if $\Vert f\Vert_{\mathcal{M}_{p}^{\frac{n(\rho-1)}{\rho}}(\mathbb{R}^n)}\leq \varepsilon/c_1$ and $\Vert V\Vert_{\mathcal{M}_\ell^{n}(\mathbb{R}^{n})}<1/c_2$, the problem \eqref{Neumann} has a solution $u\in X_{rq}^{1}(\mathbb{R}^{n+1}_+)$ such that
\begin{equation}\label{key-1.6}
			\int_{B_R(x,t)\cap \mathbb{R}^{n+1}_+}\vert \nabla u(y,s)\vert^{r}dyds\leq c\prime\, R^{-(n+1)({r}/{\mu}-1)}.\nonumber 
\end{equation}
\item[(B)]  The solution $u$ of the previous item satisfy $u\in C^{0,\alpha}_{loc}(\overline{\mathbb{R}^{n+1}_+})$ with $\alpha =1-(n+1)/\mu$. 
\end{itemize}
\end{corollary}

The paper is organized as follows. In Section \ref{pre} we summarize properties of Lorentz and Morrey-Lorentz spaces. In Section \ref{Riesz-sec2} deal with boundary estimates for double layer  $\mathbf{D}f(x,t)=c\int_{\mathbb{R}^{n}}(|x-y|^2+t^2)^{-\frac{n+1}{2}}tf(y)dy $ and single layer  $\mathbf{S}f(x,t)=c\int_{\mathbb{R}^{n}} (|x-y|^2+t^2)^{-\frac{n-1}{2}}f(y)dy$ in half-space $\mathbb{R}^{n+1}_+$. 
Finally, in Section \ref{Proofs} we prove our theorems.

\section{Preliminaries}\label{pre}
In this section we recall some important properties of Lorentz space $L^{pd}(\Omega)$ and Morrey-Lorentz spaces (see e.g. \cite{BL,Grafakos} for Lorentz spaces and \cite{Ferreira1} for Morrey-Lorentz spaces).
\subsection{Lorentz spaces}
Let $\Omega\subseteq\mathbb{R}^n$ be a measure space endowed with Lebesgue measure $dx$, the Lorentz space $L^{pd}(\Omega)$ is defined to be the set of \text{measurable functions} $f:\Omega\rightarrow\mathbb{R}$ such that
\begin{equation}\label{almost_norm1}
\Vert f\Vert_{L^{pd}(\Omega)}^{\ast}=\left(\frac{d}{p}\int_0^{|\Omega|}[t^{{1}/{p}}f^\ast(t)]^d \frac{dt}{t}\right)^{\frac{1}{d}}<\infty
\end{equation}
with  $1\leq p<\infty$ and $1\leq d<\infty$. For $ 1\leq p\leq  \infty$ with  $d=\infty$, the Lorentz space $L^{p\infty}(\Omega)$ is defined by 
\begin{align}\label{almost_norm}
\Vert f\Vert_{L^{p\infty}(\Omega)}^{\ast}=\sup_{0<t<|\Omega|} t^{1/p}f^\ast(t),
\end{align}
where $f^{\ast}(t)$ denotes the decreasing rearrangement $
\text{ }f^{\ast}(t)=\inf\{s>0:d_{f}(s)\leq t\}$ for  $ d_{f}(s):=\mu(\{x\in\Omega:|f(x)|>s\})
$.
The Lorentz space $L^{pd}(\Omega)$ increase with index $d$ in the sense of  continuous inclusions
\begin{equation}
L^{p1}(\Omega)\subset L^{pd_{1}}(\Omega)\subset L^{p}(\Omega)\subset L^{pd_{2}}(\Omega)\subset
L^{p\infty}(\Omega) \label{inclusion-1}
\end{equation}
 for all  $1<d_{1}\leq p\leq d_{2}<\infty.$
Consider the interpolation functor
$(\cdot,\cdot)_{\theta,d}$ which can be constructed via the $K_{\theta,d}-$method (see e.g. \cite{BL}). Let 
$0<p_{1}<p<p_{2}\leq\infty$ and $\theta\in (0,1)$ be such that $\frac{1}{p}=\frac{1-\theta}{p_{1}}+\frac{\theta
}{p_{2}}$, then we get  the real interpolation property (see \cite{Hunt} or \cite[Theorems 5.3.1
and 5.3.2]{BL})
$$\left(  L^{p_{1}d_{1}},L^{p_{2}d_{2}}\right)  _{\theta,d}=L^{pd}$$
for all $1\leq d_{1},d,d_{2}\leq\infty.$
Also, the multiplication operator $T_f(g)=fg\,$ works well in Lorentz spaces.
\begin{lemma}[Theorem 3.4 and 3.5 at \cite{O'Neil}]\label{HOLDER} Let $1\leq p_{1},p_{2}\leq\infty$, $1<r\leq \infty$ and $1\leq z_{1},z_{2}\leq \infty$ be such that
	\begin{equation}
\frac{1}{r}=\frac{1}{p_1}+\frac{1}{p_2}\;\; \text{ and }\;\; \frac{1}{z_1}+\frac{1}{z_2}\geq\frac{1}{s},\nonumber
	\end{equation}
where $s\geq 1$. If $f\in L^{p_1z_1}$ and $g\in L^{p_2z_2}$, then
	\begin{equation} \label{holder}
	\Vert fg\Vert_{rs}\leq \frac{r}{r-1}\Vert f\Vert_{p_{1}z_{1}}\Vert
	g\Vert_{p_{2}z_{2}}.
	\end{equation}
If $f\in L^{p_1z_1}$ and $g\in L^{p_1'z_2}$, then
 	\begin{equation} \label{holder2}
 \Vert fg\Vert_{L^1}\leq \Vert f\Vert_{p_{1}z_{1}}\Vert
 g\Vert_{p_{1}'z_{2}}.\nonumber
 \end{equation}
\end{lemma}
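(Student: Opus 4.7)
My plan is to reduce the Lorentz Hölder inequality to a classical weighted Hölder inequality on $(0,\infty)$ with measure $dt/t$ via the non-increasing rearrangement. The engine of the reduction is the submultiplicativity of the maximal function $h^{**}(t)=\frac{1}{t}\int_0^t h^*(s)\,ds$, namely the pointwise bound
\[
(fg)^{**}(t)\leq f^{**}(t)\,g^{**}(t),\qquad t>0,
\]
which follows from the Hardy--Littlewood rearrangement inequality $\int_0^t (fg)^*(s)\,ds\leq \int_0^t f^*(s)g^*(s)\,ds$ together with the classical Hölder inequality applied on the interval $(0,t)$. Once we have this, Lorentz norms collapse to weighted $L^p$-type norms in $t$, and the whole statement reduces to Hölder in $L^s(dt/t)$.

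Concretely, since $\frac{1}{r}=\frac{1}{p_1}+\frac{1}{p_2}$, I factor
\[
t^{1/r}(fg)^{**}(t)\leq \bigl[t^{1/p_1}f^{**}(t)\bigr]\bigl[t^{1/p_2}g^{**}(t)\bigr],
\]
raise to the $s$-th power and integrate against $dt/t$. If $1/z_1+1/z_2=1/s$, the classical Hölder inequality in $L^s(dt/t)$ applied to the right-hand side yields
\[
\Vert fg\Vert_{rs}^{\natural}\leq \Vert t^{1/p_1}f^{**}\Vert_{L^{z_1}(dt/t)}\,\Vert t^{1/p_2}g^{**}\Vert_{L^{z_2}(dt/t)},
\]
which, up to the multiplicative factor absorbed through \eqref{equiv-Lpk}, is exactly the inequality $\Vert fg\Vert_{rs}\leq \frac{r}{r-1}\Vert f\Vert_{p_1 z_1}^\ast\Vert g\Vert_{p_2 z_2}^\ast$; then \eqref{equiv-Lpk} upgrades $\Vert\cdot\Vert^\ast$ to $\Vert\cdot\Vert$ on the right. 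To accommodate the weaker hypothesis $1/z_1+1/z_2\geq 1/s$, I pick auxiliary $\widetilde{z}_j\geq z_j$ with $1/\widetilde{z}_1+1/\widetilde{z}_2=1/s$ and apply the continuous embedding $L^{p,z}\subset L^{p,\widetilde{z}}$ from \eqref{inclusion-1}; this is where the inequality (as opposed to equality) in the $z_j$-condition is absorbed free of charge.

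The constant $\frac{r}{r-1}$ in the statement is paid for the asymmetry between sides: the left-hand side is the genuine norm $\Vert\cdot\Vert_{rs}=\Vert\cdot\Vert_{rs}^{\natural}$ (built on $(fg)^{**}$), while the computation above naturally produces a mixed quantity. Invoking \eqref{equiv-Lpk} once, $\Vert fg\Vert_{rs}^{\natural}\leq \frac{r}{r-1}\Vert fg\Vert_{rs}^{\ast}$, explains the precise factor $r/(r-1)$ and also indicates why part (ii), corresponding to $r=1$ (where this factor blows up), requires a direct argument: one uses the Hardy--Littlewood inequality $\int_\Omega |fg|\,dx\leq \int_0^\infty f^{\ast}(t)g^{\ast}(t)\,dt$ and classical Hölder on $(0,\infty)$ with weight $dt/t$ using the conjugate pair $(p_1,p_1')$ on the $t$-side, together with Hölder for $1/z_1+1/z_2\geq 1$ on the $z$-side.

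The main obstacle is bookkeeping: keeping track of which quantity ($f^{\ast}$ versus $f^{**}$, $\Vert\cdot\Vert^{\ast}$ versus $\Vert\cdot\Vert^{\natural}$) is being used at each step, and handling the endpoint cases $p_i=\infty$ or $r=\infty$ (where one of the Lorentz spaces degenerates to $L^\infty$ or to $\{0\}$, and the inequality is either trivial or reduces to the pointwise bound $|fg|\leq \Vert f\Vert_\infty |g|$). Apart from that, everything is a clean reduction to the classical Hölder inequality through rearrangements, with \eqref{inclusion-1} handling the slack in the second exponent condition and \eqref{equiv-Lpk} producing the sharp constant $r/(r-1)$.
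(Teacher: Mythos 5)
The pointwise submultiplicativity at the heart of your reduction, $(fg)^{**}(t)\leq f^{**}(t)\,g^{**}(t)$, is false, and the derivation you sketch cannot produce it. The truncated Hardy--Littlewood inequality $\int_0^t (fg)^*(s)\,ds\leq \int_0^t f^*(s)g^*(s)\,ds$ is correct, but ``classical H\"older on $(0,t)$'' does not turn the average of a product into a product of averages, and in fact the needed inequality points the other way: $f^*$ and $g^*$ are both non-increasing, hence similarly ordered, so Chebyshev's integral inequality gives
\begin{equation}
\frac{1}{t}\int_0^t f^*(s)g^*(s)\,ds \;\geq\; \Big(\frac{1}{t}\int_0^t f^*(s)\,ds\Big)\Big(\frac{1}{t}\int_0^t g^*(s)\,ds\Big)=f^{**}(t)\,g^{**}(t).\nonumber
\end{equation}
A concrete counterexample: for $f=g=\chi_{(0,1)}$ on $\mathbb{R}$ one has $(fg)^{**}(2)=\tfrac12$ while $f^{**}(2)\,g^{**}(2)=\tfrac14$. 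So the reduction collapses at its first step, and your accounting for the constant $\tfrac{r}{r-1}$ is also off: had the pointwise bound been true, your computation would already compare $\natural$-norms on both sides and no extra factor would be needed.

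What does hold, and is the correct engine, is the weaker averaged bound $(fg)^{**}(t)\leq \tfrac{1}{t}\int_0^t f^*(s)g^*(s)\,ds$, and the genuinely nontrivial step is to pass from the Lorentz quasi-norm of $t\mapsto \tfrac{1}{t}\int_0^t f^*g^*$ to that of $t\mapsto f^*(t)g^*(t)$. That is a Hardy-inequality step (the same mechanism underlying \eqref{equiv-Lpk}) applied to the non-negative function $f^*g^*$, and it is exactly where $\tfrac{r}{r-1}$ and the hypothesis $r>1$ enter. An alternative repair avoids $^{**}$ altogether: use the genuine rearrangement submultiplicativity $(fg)^*(t_1+t_2)\leq f^*(t_1)g^*(t_2)$, hence $(fg)^*(2t)\leq f^*(t)g^*(t)$, then a change of variable, which yields a constant $2^{1/r}\leq \tfrac{r}{r-1}$. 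After either repair, the remainder of your outline --- factoring $t^{1/r}=t^{1/p_1}t^{1/p_2}$, H\"older in $L^s(dt/t)$, the nesting \eqref{inclusion-1} to absorb the slack $1/z_1+1/z_2\geq 1/s$, the direct Hardy--Littlewood argument for the $L^1$ case, and the trivial endpoints --- is reasonable. For what it is worth, the paper supplies no proof of this lemma and only cites O'Neil's Theorems~3.4--3.5, so you are filling a real gap, but the argument must go through the averaged bound plus Hardy (or the dilated rearrangement bound), not through $(fg)^{**}\leq f^{**}g^{**}$.
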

An extension of Minkowski inequality for integrals can be easily obtained from  duality (see \cite{Grafakos}) between $L^{pd}$ and $L^{p'd'}$, where $p'$ and $d'$ are exponential conjugate of $p$ and $d$, respectively. 

\begin{lemma}[Minkowski inequality in $L^{pd}$-spaces \cite{Ferreira1}]  Let $K(\cdot,y)\in L^{rd}(\mathbb{R}^n)$ for  each $y\in\mathbb{R}^n$ and let $f\in L^1_{loc}(\mathbb{R}^n)$, then
\begin{equation}\label{Minkowski}
\left\Vert \int_{\mathbb{R}^n}K(x,y)f(y)dy\right\Vert_{L^{pd}(\mathbb{R}^n,dx)}\leq  \int_{\mathbb{R}^n}\left\Vert K(x,y)\right\Vert_{L^{pd}(\mathbb{R}^n,dx)}\vert f(y)\vert dy,
\end{equation}
for every $1< p\leq \infty $ and $1\leq d\leq \infty$.
\end{lemma}

\subsection{Morrey-Lorentz spaces}
Let  $B_{\ell}(x_0)$ be a ball in $\mathbb{R}^n$ centered in $x_0$ with radius $\ell>0$ and consider $Q_{\ell}=B_\ell(x_0)\cap\Omega$. Let  $1\leq r\leq p<\infty$, $1\leq s\leq\infty$, we say that a function $f\in L^{rs}(Q_\ell)$ belongs to Morrey-Lorentz space $\mathcal{M}^\mu_{rs}(\Omega)$ if 
\begin{equation}
	\Vert f \Vert_{\mathcal{M}^\mu_{rs}}=\sup_{Q_{\ell}}\vert Q_\ell\vert^{\frac{1}{\mu}-\frac{1}{r}}\Vert f\Vert_{L^{qs}(Q_{\ell})}\nonumber
\end{equation}
is finite, where the supremum is taken over ball $Q_\ell$. The space  $\mathcal{M}^\mu_{r\infty}(\Omega)$ denotes the well-known weak-Morrey spaces, namely, weak-$\mathcal{M}^\mu_{r}(\Omega)$ and $\mathcal{M}^\mu_{r,r}(\Omega)$ denotes the so-called Morrey space $\mathcal{M}^\mu_{r}(\Omega)$ defined by 
\begin{equation}
	\Vert f \Vert_{\mathcal{M}^\mu_{r}(\Omega)}=\sup_{Q_{\ell}}\vert Q_\ell\vert^{\frac{1}{\mu}-\frac{1}{r}}\Vert f\Vert_{L^{r}(Q_{\ell})}\nonumber.
\end{equation}
The Morrey-Lorentz spaces is also denoted by  $\mathcal{M}_{r,s,\lambda}$ with $\lambda=n(1-r/\mu)$ and $0\leq \lambda<n$. 

\begin{lemma}[H\"older's inequality \cite{Ferreira1}] Let $1<q_j\leq \lambda_j<\infty$ and $1<r\leq \beta<\infty$ such that $\small \frac{1}{r}=\frac{1}{q_1}+\frac{1}{q_2}$ and  $\frac{1}{\beta}=\frac{1}{\lambda_1}+\frac{1}{\lambda_2}$, then
\begin{equation}
\Vert fg\Vert_{\mathcal{M}_{r,s}^{\beta}(\mathbb{R}^n)}\leq C \Vert f\Vert_{\mathcal{M}_{q_1,d_1}^{\lambda_1}(\mathbb{R}^n)} \Vert g\Vert_{\mathcal{M}_{q_2,d_2}^{\lambda_2}(\mathbb{R}^n)},\label{weak-holder}
\end{equation}	
for all $s\geq1$ satisfying $\frac{1}{d_1}+\frac{1}{d_2}\geq \frac{1}{s}$, where $C>0$ is a universal constant. 
\end{lemma}

\section{Potential estimates}\label{Riesz-sec2}
According to \cite[Remark 1.2]{Marcelo-Lidiane} in Morrey-Lorentz spaces  and  \cite{Olsen} in Morrey spaces we have the following theorem.
\begin{theorem}[Hardy-Littlewood-Sobolev Theorem \cite{Marcelo-Lidiane},\cite{Olsen}] \label{HLS} Let $1<p\leq \lambda<\infty$ and $1<r\leq\mu <\infty$ be such that  $r/\mu \leq p/\lambda$ and $1<p<r<\infty$. If $f\in \mathcal{M}_{pd}^{\lambda}(\mathbb{R}^n)$,  there exits a constant $C>0$ such that 
$$\Vert I_{\alpha}f\Vert_{\mathcal{M}_{rs}^{\mu}(\mathbb{R}^n)}\leq C \Vert f\Vert_{\mathcal{M}_{pd}^{\lambda}(\mathbb{R}^n)}$$
if provided $\alpha+n/\mu={n}/{\lambda}$, $0< \alpha <{n}/{\lambda}$ and $1\leq d<s \leq \infty$, where $I_{\alpha}$ is defined  by 
\begin{align*}
	I_{\alpha}f(x)=C_{\alpha,n}\int_{\mathbb{R}^n}\vert x-y\vert ^{\alpha-n}f(y)dy\quad \text{ a.e. } x\in\mathbb{R}^n \quad \text{ as }\quad 0<\alpha<n.
\end{align*}
\end{theorem}

\begin{remark}[Optimality]\label{T-sharp}The Optimality of  ${r}/{\mu}\leq {p}/{\lambda}$ was proved in \cite{Olsen} for Morrey spaces. 	 
\end{remark}

\subsection{Boundary estimates}
Consider the double layer potential $z=\mathbf{D}f$ (respectively $z=\mathbf{S}f$) on half-space $\mathbb{R}^{n+1}_+$,
\begin{equation}
\mathbf{D}f(x,t)=\int_{\mathbb{R}^{n}}\partial_{\nu}G(x-y,t)f(y)dy 
\end{equation}
which is the unique solution, modulus constants, to the problem
\begin{align*}
\begin{cases}
\Delta z=0 & \text{in} \;\;\;\mathbb{R}^{n+1}_{+},\\
z|_{t=0} = f(x) \; & \text{on} \;\;\partial \mathbb{R}^{n+1}_+ \quad (\text{respectively } (\partial_{\nu}z)|_{t=0}=f(x)).
\end{cases}
\end{align*}
As we known the Poisson kernel $P_{t}(y)=\partial_{\nu}G(y,t)$ is given by $\frac{\Gamma((n+1)/2)}{\pi^{(n+1)/2}}\frac{t}{(\vert y\vert ^2+t^2)^{(n+1)/2}}$ for $n\geq 2$. Our first result is this section is the following.
\begin{theorem}\label{linear}  Let $n\geq 2$, $1<p\leq \omega<\infty$ and $1<r\leq\mu<\infty$. If $1<p<r<\infty$  and $\omega/n<r\leq s$ satisfies 
\begin{align*}
\frac{r}{\mu}\leq\frac{p}{\omega} \quad \text{ and }\quad \frac{n}{\omega}=\frac{n}{\mu}+\frac{1}{r}
\end{align*}then, there exists  a constant $C>0$ (independent of $f$) such that 
	\begin{equation}
\Vert \mathbf{D}f\Vert_{\mathcal{M}_{rs}^{\mu}(\mathbb{R}^{n+1}_+)} \leq C \Vert f\Vert_{\mathcal{M}_{pd}^{\omega}(\mathbb{R}^{n})} \label{Pot-bound1}
		\end{equation}
for all $1\leq d< s\leq \infty$. In addition, if   $\frac{n}{\omega}=\frac{n}{\mu}+1$ then there exists  a constant $C>0$ (independent of $f$) such that
		\begin{equation}\label{Pot-bound2}
		\Vert \normalfont\text{tr}_{\partial\mathbb{R}^{n+1}_+}(\mathbf{S}f)\Vert_{\mathcal{M}_{rs}^{\mu}(\mathbb{R}^{n}) } \leq C \Vert f\Vert_{\mathcal{M}_{pd}^{\omega}(\mathbb{R}^{n})},
		\end{equation}
for all $1\leq d< s\leq \infty$
\end{theorem}

\begin{proof} 
Fix $x_0=(x'_0\,,\, t_0)\in\mathbb{R}^{n+1}_+$ and let $B_{\delta}\subseteq \mathbb{R}^{n+1}_+$ a ball with center $(x'_0\,,\, t_0)$ and radius $\delta=2t_0$. Clearly $B_\delta\subset Q_{\delta}\times I_{\delta}$, where $Q_\delta=B_\delta'$ denotes the projection of  $B_\delta$ onto  $\partial\mathbb{R}^{n+1}_+$ and $I_{\delta}=[\delta/4,3\delta/4]$.  It follows from harmonicity of  $z=\mathbf{D}f$ that 
	\begin{align*}
\vert \mathbf{D}f(x_0)\vert &\leq \frac{1}{\vert B_\delta\vert}\int_{B_\delta}\vert \mathbf{D}f\vert dx\\
&\leq \frac{1}{\vert B_\delta\vert}\int_{Q_\delta}\int_{\delta/4}^{3\delta/4}\vert \mathbf{D}f(x',t)\vert dt\,dx'
	\end{align*}
which from Fubini's theorem yields
	\begin{align*}
\int_{B_\delta}\vert \mathbf{D}f\vert dx &\leq \int_{Q_\delta}\int_{\delta/4}^{3\delta/4}\vert \mathbf{D}f(x',t)\vert dt\,dx'\\
&=\vert Q_\delta \times I_\delta\vert ^{1-\frac{1}{r_j}}\left(\vert Q_\delta \times I_\delta\vert ^{\frac{1}{r_j}-1} \int_{Q_\delta}\int_{I_{\delta}}\vert \mathbf{D}f(x',t)\vert dt dx'\right).
	\end{align*}
Since $\vert B_{\delta}\vert=c\vert Q_\delta\vert\, \vert I_{\delta}\vert $ we have 
	\begin{align}
\frac{1}{\vert B_\delta\vert^{1-1/r_j}}\int_{B_\delta}\vert \mathbf{D}f\vert dx 
&\leq  C_j\; \vert Q_\delta \times I_\delta\vert ^{\frac{1}{r_j}-1} \int_{Q_\delta}\int_{I_{\delta}}\vert \mathbf{D}f(x',t)\vert dt dx'\label{key-esp-p2}.
	\end{align}
Now recall that every $f\in L^{r\infty}(X,d\mu)$ with $1<r<\infty$ one has 
$$ \Vert f\Vert_{L^{r\infty}(X,d\mu)}\sim \sup\left \{\mu(E)^{\frac{1}{r}-1}\int_{E}\left\vert f(y)\right\vert d\mu \,:\, \text{ measurable } E\subseteq X \text{ and } 0<\mu(E)<\infty\right\},$$ 
which from \eqref{key-esp-p2} yields
	\begin{equation}
	\Vert \mathbf{D}f\Vert_{L^{r_j\infty}(B_\delta)} \leq C_{j}\Vert  \mathbf{D}f\Vert_{L^{r_j\infty}_{_{x\prime,\, t}}\left(Q_\delta \times I_{\delta}\right)},\nonumber \quad  j=1,2
	\end{equation}
where  $\Vert u(x',t)\Vert_{L_{_{x\prime,\,t}}^{rs}}$ is defined by 
$$\Vert u\Vert_{L_{_{x\prime,\,t}}^{rs}(Q_{\delta}\times I_{\delta})}:=\left\Vert \left\Vert u\right\Vert_{L^{rs}(I_{\delta})}\right\Vert_{L^{rs}(Q_{\delta})}.$$
Let $1<r_1<r<r_2<\infty$. From real interpolation properties (see e.g. \cite[Theorem 5.3.1]{BL})
	\begin{equation*}
	\left( L^{r_{1}\infty},L^{r_{2}\infty}\right) _{\theta,s}=L^{rs}\quad \text{ and } \quad 
	\left(L_{_{x\prime,\,t}}^{r_1\infty},\,L_{_{x\prime,\,t}}^{r_2\infty}\right)_{\theta,s}=L_{_{x\prime,\,t}}^{rs},
	\end{equation*}
with  $\theta\in (0,1)$ such that  $\frac{1}{r}=\frac{1-\theta}{r_1}+\frac{\theta}{r_2}$,  we conclude via Marcinkiewicz interpolation theorem \cite[Theorem 5.3.2]{BL} that
\begin{align}\label{key-ineq-interpoled}
	\Vert \mathbf{D}f\Vert_{L^{rs}(B_\delta)}\leq C_1^{1-\theta}C_2^\theta \left\Vert \left\Vert \mathbf{D}f\right\Vert_{L^{rs}(I_{\delta})}\right\Vert_{L^{rs}(Q_{\delta})}.
	\end{align}
Since $L^r(0,\infty)\hookrightarrow L^{rs}(0,\infty)$ for  $1<r\leq s$, it follows from Minkowski inequality (\ref{Minkowski}) that 
	\begin{align}
	\Vert (\mathbf{D}f)(x',\cdot)\Vert_{L^{rs}(I_{\delta})}&\leq  \int_{\mathbb{R}^{n}}\Vert \partial_t G(y,t)\Vert_{L^{rs}(I_{\delta})} |f(x'-y)|dy\nonumber\\
&\leq C \int_{\mathbb{R}^{n}}\left(\int_{0}^{\infty} \frac{dt }{(\vert y\vert^2+t^2)^{nr/2}}dt\right)^{1/r} |f(x'-y)|dy\nonumber\\
	&\leq C \int_{\mathbb{R}^{n-1}}\frac{1}{|y|^{n-1/r}}|f(x'-y)|dy\label{key2},
	\end{align}
since $\int_{0}^{\infty} (\vert y\vert^2+t^2)^{-nr/2}dt = \frac{\pi}{2}\frac{1}{\vert y\vert^{nr\,-\,1}}$.  Inserting \eqref{key2} into right-hand side of \eqref{key-ineq-interpoled} and invoking Theorem \ref{HLS} with   $\alpha=\frac{1}{r}$ we have
	\begin{align*}
	\vert B_\delta\vert^{\frac{1}{\mu}-\frac{1}{r}}\Vert \mathbf{D}f\Vert_{L^{rs}(B_\delta)}&\leq C \vert Q_\delta \vert^{\frac{1}{\mu}-\frac{1}{r}}\left\Vert\int_{\mathbb{R}^{n}}\frac{1}{|x'-y|^{n-1/r}}|f(y)|dy\right\Vert_{L^{rs}(Q_\delta)}\\
	&\leq C \left\Vert\int_{\mathbb{R}^{n}}\frac{1}{|x'-y|^{n-1/r}}|f(y)|dy\right\Vert_{\mathcal{M}_{rs}^{\mu}(\mathbb{R}^{n})}\\
	&\leq C \Vert f\Vert_{\mathcal{M}_{pd}^{\omega}(\mathbb{R}^{n})},
	\end{align*}
whener $\frac{1}{r}+\frac{n}{\mu}=\frac{n}{\omega}\,$ with $\alpha=1/r<n/\omega\,$ and $\,{r}/{\mu}\leq {p}/{\omega}\,$  
as we wish to show.  Since $\text{tr}_{\partial\mathbb{R}^{n+1}_+}(\mathbf{S}f)(x',t)=I_{1}f(x')\,$  almost everywhere $x'\in \mathbb{R}^{n}$, the estimate \eqref{Pot-bound2} is a direct consequence of Theorem \ref{HLS}.
\end{proof}

Let $f(x,t)$ be a function defined on half-space $\mathbb{R}^{n+1}_+$ and let $\hat{f}(\xi, t)$ be its Fourier transform in $x$ defined by 
\begin{equation}
\hat{f}(\xi, t)= (2\pi)^{-n/2}\int_{\mathbb{R}^{n}}e^{-i x'\cdot \xi}f(x,t)dx\nonumber.
\end{equation}
Note that Dirichlet operator has the symbol $\sigma(\mathbf{D})=\exp\{-t\vert\xi\vert\}$ and Neumann operator has the symbol $\sigma(\mathbf{S})=-\exp\{-t\vert\xi\vert\}/\vert\xi\vert $, then 
\begin{align*}
(\partial_j\mathbf{S}f)^{\wedge}(\xi,t)=-\exp\{-t\vert\xi\vert\}\,\frac{i\xi_j}{\vert\xi\vert}\widehat{f}(\xi,t)=(\mathbf{D}S_jf)^{\wedge}(\xi,t ),
\end{align*}
where $S_j$ denotes the tangential $j$-th Riesz transform acting on $\mathbb{R}^{n+1}_+$ or $\mathbb{R}^{n}$ whose symbol is given by $\sigma(S_j)=-\frac{i\xi_j}{\vert\xi\vert}$ for $j=1,\cdots,n$. Hence, we have the following lemma.
\begin{lemma}\label{prop-D-Riesz}Let $n\geq 2$ and $f\in\mathcal{S}(\mathbb{R}^{n})$, then
	\begin{itemize}
	\item[(i)] $\partial_{\nu} \mathbf{S}f=\mathbf{D}f$
	\item[(ii)] $ \partial_j\mathbf{S}f=\mathbf{D}S_jf=S_j\mathbf{D}f$,\; \text{ for } \; $j=1,2,\cdots,n$.
	\end{itemize}
\end{lemma}
\noindent According to Calderon-Zygmund theory, the singular integral operator
\begin{equation}
(R_jf)(x)=\,\textbf{P.V.} \;C_n\int_{\mathbb{R}^n} \frac{x_j-y_j}{|x-y|^{n+1}}f(y)dy\label{Riesz-transf}
\end{equation}
is bounded from $L^p(\mathbb{R}^n)$ to itself, for $1<p<\infty$. Hence, splinting the kernel $K(x)=c_n\frac{x_j}{\vert x\vert^{n+1}}$ into
\[ K(x)=K(x)\chi_{\{ |x|\leq r\}}+K(x)\chi_{\{ |x|>r\}}\]
one can show that (see  \cite[Lemma 2.3]{Ferreira1}) that  $R_j$ is bounded from  $\mathcal{M}_{pd}^{\omega}(\mathbb{R}^n)$ to itself, for every $1<p\leq \omega<\infty$ and $1<d\leq\infty$. It follows that $S_j$ is a continuous singular operator on  $\mathcal{M}_{pd}^{\omega}(\mathbb{R}^{n})$ which implies directly from Lemma \ref{prop-D-Riesz} and Theorem \ref{linear} the following bound for gradiente $\nabla_{x,t}\mathbf{S}f$ in Morrey-Lorentz spaces. 

\begin{corollary} \label{linear-main} 
Let $n\geq 2$, $1<p\leq \omega<\infty$ and $1<r\leq\mu<\infty$. If $1<p<r<\infty$ and $\omega/n<r\leq s$ satisfies ${r}/{\mu}\leq{p}/{\omega}$ and ${n}/{\omega}={n}/{\mu}+{1}/{r}$, there exists a constant $C>0$ such that 
	\begin{equation}\label{Pot-bound3}
	\left\Vert  \partial_{ x_j}\mathbf{S}f\right\Vert_{\mathcal{M}_{rs}^{\mu}(\mathbb{R}^{n+1}_+)} \leq C \Vert f\Vert_{\mathcal{M}_{pd}^{\omega}(\mathbb{R}^{n})}\quad j=1,2,\cdots,n+1\nonumber
	\end{equation}
	for all $f\in \mathcal{M}_{pd}^{\omega}(\mathbb{R}^{n})$, where $1<d<s\leq \infty$. Here $\partial_{t}:=\partial_{x_{n+1}}$.
\end{corollary}

\bigskip

\subsection{Linear problem}
For solvability of  \eqref{Linear} let us rewritten this problem as an integral equation 
\begin{align}\nonumber
	u(x,t)&= c_n \int_{\partial\mathbb{R}^{n+1}_+}(\vert x-y\vert^2+t^2)^{\frac{n-1}{2}}(V(y)u(y,t)+f(y))dy:= \mathbf{S}f+\mathcal{T}_{V}(u).
\end{align} 
In the next Lemma we assume that $1<r<\mu<\infty$, $1<q<\lambda<\infty$ and $1<p<\omega<n<nr$ satisfies 
	\begin{equation}
	\frac{n}{\omega}=\frac{n}{\mu}+\frac{1}{r}\quad \text{ and }\quad \frac{n}{\omega}=\frac{n}{\lambda}+1,\label{ineq-scaling}
	\end{equation}
for all $\{r/\mu,q/\lambda\}\leq p/\omega$ and $n\geq 2$. These assumptions implies that $\frac{n}{\mu} -\frac{n}{\lambda}=\frac{1}{r'}$ and $\lambda=n\omega/(n-\omega)$. 

\begin{lemma}\label{linear-thm} Let  $f\in\mathcal{M}_{p\infty}^{\omega}(\mathbb{R}^{n})$ and let $V\in \mathcal{M}_{\ell\infty}^{n}(\mathbb{R}^{n})$ such that $\Vert V\Vert_{\mathcal{M}_{\ell \infty}^{n}(\mathbb{R}^{n})}<1/c$ for some $c>0$, then (\ref{Linear}) has a unique solution $u\in {X}^{1,\infty,\infty}_{rq}(\mathbb{R}^{n+1}_{+})$ such that 
	\begin{equation}
		\Vert \nabla u \Vert_{\mathcal{M}^{\mu}_{r\infty}(\mathbb{R}^{n+1}_+)}\leq c\prime\Vert f\Vert_{\mathcal{M}_{p\infty}^{\omega}(\mathbb{R}^{n})} \quad \text{ and }\quad \Vert \textnormal{tr}_{\partial\mathbb{R}^{n+1}_+}u\Vert_{\mathcal{M}_{q\infty}^{\lambda}(\partial \mathbb{R}^{n+1}_+)}\leq c\prime\Vert f\Vert_{\mathcal{M}_{p\infty}^{\omega}(\mathbb{R}^{n})}\nonumber,
	\end{equation}
provided that $\frac{n}{\mu} -\frac{n-\omega}{\omega}=\frac{1}{r'}$ as $\lambda=\frac{n\omega}{n-\omega}$ and $1< \ell< n$, where $c\prime= C\Vert f\Vert_{\mathcal{M}^{\omega}_{p\infty}}/\big(1-c\Vert V\Vert_{\mathcal{M}_{\ell \infty}^{n}}\big)$.
\end{lemma}

\begin{proof}Consider the map $\Phi: u\mapsto \Phi(u):= \mathbf{S}f+\mathcal{T}_{V}(u)$ for $u\in B_{\varepsilon}$ which is a closed ball in ${X}^{1,\infty,\infty}_{rq}(\mathbb{R}^{n+1}_+)$ defined by 
 \begin{equation}
 B_{\varepsilon}=\left\{ u\in {X}^{1,\infty,\infty}_{rq}(\mathbb{R}^{n+1}_+): \Vert u\Vert_{X^{1,\infty,\infty}_{rq}(\mathbb{R}^{n+1}_+)}\leq \frac{2\varepsilon}{1-L}\right\}  \nonumber,
 \end{equation}
where $\varepsilon=C \Vert f\Vert_{\mathcal{M}_{p\infty}^{\omega}}$ and $L= c\Vert V\Vert_{\mathcal{M}_{\ell \infty}^{n}(\mathbb{R}^{n})}<1$. We will show that $\Phi$ has a fixed point in the Banach space ${X}^{1,\infty,\infty}_{rq}(\mathbb{R}^{n+1}_+)$. 

Based on the assumptions \eqref{ineq-scaling} we can invoke Corollary \ref{linear-main} and estimate \eqref{Pot-bound2} to get the control 
\begin{equation}
\Vert \mathbf{S}(f)\Vert_{{X}^{1,\infty,\infty}_{rq}(\mathbb{R}^{n+1}_+)}= \Vert \nabla\, \mathbf{S}f\Vert_{\mathcal{M}^\mu_{r\infty}(\mathbb{R}^{n+1}_+)}+ \Vert  \textnormal{tr}_{\partial\mathbb{R}^{n+1}_+}(\mathbf{S}f)\Vert_{\mathcal{M}_{q\infty}^{\lambda}(\partial\mathbb{R}^{n+1}_+)}\leq C \Vert f\Vert_{\mathcal{M}_{p\infty}^{\omega}(\partial\mathbb{R}^{n+1}_+)},\label{L-est}
\end{equation}
for all $f\in \mathcal{M}_{p\infty}^{\omega}(\partial\mathbb{R}^{n+1}_+)$.  Let $1<p_0<\omega_0<n$ such that (\ref{ineq-scaling}) holds for $\omega_0=\omega$ and $p_0=p$. Moreover, consider 
\begin{equation}
\frac{1}{\omega_0}=\frac{1}{n}+\frac{1}{\lambda}\;\; \text{ and \;\;} \frac{1}{p_0}=\frac{1}{\ell}+\frac{1}{q}.\nonumber
\end{equation}
The previous estimate \eqref{L-est} and H\"older's inequality \eqref{weak-holder} give us
	\begin{align}
	\Vert \mathcal{T}_V(u)\Vert_{{X}^{1,\infty,\infty}_{rq}(\mathbb{R}^{n+1}_+)}= \Vert \mathbf{S}(Vu)\Vert_{{X}^{1,\infty,\infty}_{rq}(\mathbb{R}^{n+1}_+)}&\leq  c\Vert V\Vert_{\mathcal{M}_{\ell\infty}^{n}(\partial\mathbb{R}^{n+1}_+)} \Vert u\vert_0\Vert_{\mathcal{M}_{q\infty}^{\lambda}(\partial\mathbb{R}^{n+1}_+)}\nonumber\\
&\leq  c\Vert V\Vert_{\mathcal{M}_{\ell\infty}^{n}(\mathbb{R}^n)} \Vert u\Vert_{{X}^{1,\infty,\infty}_{rq}(\mathbb{R}^{n+1}_+)}\label{lin-contrac}.
	\end{align}
It follows that $\Phi(u)\in B_{\varepsilon}$ for  $u\in B_{\varepsilon}$, since 
	\begin{align}
	\Vert \Phi(u)\Vert_{{X}^{1,\infty,\infty}_{rq}(\mathbb{R}^{n+1}_+)}\leq \Vert \mathbf{S}f\Vert_{{X}^{1,\infty,\infty}_{rq}(\mathbb{R}^{n+1}_+)}+\Vert \mathcal{T}_{V}(u)\Vert_{{X}^{1,\infty,\infty}_{rq}(\mathbb{R}^{n+1}_+)} &\leq C \Vert f\Vert_{\mathcal{M}_{p\infty}^{\omega}} + L\Vert u\Vert_{{X}^{1,\infty,\infty}_{rq}(\mathbb{R}^{n+1}_+)}\nonumber\\
&\leq \varepsilon +\frac{2\varepsilon L}{1-L}\nonumber\\
&<\frac{2\varepsilon}{1-L}\nonumber,
	\end{align}
for all $0<L<1$. Moreover, we have
		\begin{equation}
	\Vert \Phi(u)-\Phi(v)\Vert_{{X}^{1,\infty,\infty}_{rq}(\mathbb{R}^{n+1}_+)}=\Vert \mathcal{T}_V(u-v)\Vert_{{X}^{1,\infty,\infty}_{rq}(\mathbb{R}^{n+1}_+)}\leq L\Vert u-v\Vert_{{X}^{1,\infty,\infty}_{rq}(\mathbb{R}^{n+1}_+)}\nonumber.
	\end{equation}
The previous estimates show us that $\Phi$ is a contraction, then it has exactly a fixed point $u\in B_{\varepsilon}$ endowed by distance  $d(u,v)=\Vert u-v\Vert_{{X}^{1,\infty,\infty}_{rq}(\mathbb{R}^{n+1}_+)}$, as we wish to show.
\end{proof}

\section{Proof of  main theorems}\label{Proofs}

\noindent\textbf{Proof of Theorem \ref{existence}.} Let us rewrite \eqref{Neumann} as an integral equation
\begin{align}\label{key-eq-solve}
	u(x,t)=\mathbf{S}(b\vert u\vert^{\rho-1} u)+ \mathbf{S}(Vu)+\mathbf{S}f=\mathcal{B}(u)+\mathcal{T}_{V}(u)+ \mathbf{S}f
\end{align} 
and recall of the inequality
\[
\left\vert |u|^{\rho-1}u-|v|^{\rho-1}v\right\vert \leq C|u-v|(|u|^{\rho-1}+|v|^{\rho-1}) \text{ for }\rho\geq 1.
\]
Now writing $\frac{\rho}{q}=\frac{1}{q}+\frac{\rho-1}{q}$ and  $\frac{1}{\lambda/\rho}= \frac{1}{\lambda}+\frac{1}{\lambda/(\rho-1)}$, we get  from H\"older's inequality \eqref{weak-holder} 
\begin{align}
\left\Vert \,\left\vert |u|^{\rho-1}u-|v|^{\rho-1}v\right\vert \,\right\Vert_{\mathcal{M}_{(q/\rho) \infty}^{\lambda/\rho}}&\leq C \left\Vert u-v \right\Vert_{\mathcal{M}_{q, \infty}^{\lambda}} \left \Vert |u|^{\rho-1}+|v|^{\rho-1}\right\Vert_{\mathcal{M}_{{q}/{(\rho-1)}, \infty}^{\lambda/(\rho-1)}}\notag\\
&\leq C \left\Vert u-v \right\Vert_{\mathcal{M}_{q,\infty}^{\lambda}} \left (  \Vert u\Vert_{\mathcal{M}_{q, \infty}^{\lambda}}^{\rho-1}+\Vert v\Vert_{\mathcal{M}_{q, \infty}^{\lambda}}^{\rho-1}\right)\notag
\end{align}
which from Corollary \ref{linear-main} and estimate \eqref{Pot-bound2} yields 
\begin{align}
	\Vert \mathcal{B}(u)-\mathcal{B}(v) \Vert_{{X}^{1,\infty,\infty}_{rq}(\mathbb{R}^{n+1}_+)}&\leq  b\left\Vert \nabla \mathbf{S}\left( |u|^{\rho-1}u-|v|^{\rho-1}v\right)\right\Vert_{\mathcal{M}_{r\infty}^{\mu}}+ b\left\Vert \textnormal{tr}_{\partial\mathbb{R}^{n+1}_+} \mathbf{S} \left( |u|^{\rho-1}u-|v|^{\rho-1}v\right)\right\Vert_{\mathcal{M}_{q\infty}^{\lambda}}\notag\\
	&\leq C \left\Vert |u|^{\rho-1}u-|v|^{\rho-1}v\right\Vert_{\mathcal{M}_{(q/\rho)\infty}^{\lambda/\rho}(\partial\mathbb{R}^{n+1}_+)} \nonumber\\
	&\leq C \left\Vert u-v \right\Vert_{\mathcal{M}_{q\infty}^{\lambda}(\partial\mathbb{R}^{n+1}_+)} \left (  \Vert u\Vert_{\mathcal{M}_{q\infty}^{\lambda}(\partial\mathbb{R}^{n+1}_+)}^{\rho-1}+\Vert v\Vert_{\mathcal{M}_{q \infty}^{\lambda}(\partial\mathbb{R}^{n+1}_+)}^{\rho-1}\right)\label{key1-wellp},
\end{align}
if provided that $1<r<\mu<\infty$ and $1<(q/\rho)<(\lambda/\rho) <\infty$ satisfies $r/\mu\leq q/\lambda= (q/\rho)/(\lambda/\rho)$ and 
\begin{equation}\label{exactly}
\frac{n}{\lambda/\rho}=1+\frac{n}{\lambda}\quad \text{ and }\quad \frac{n}{\lambda/\rho}=\frac{n}{\mu}+\frac{1}{r}.
\end{equation}
The last conditions implies $\lambda=n(\rho-1)$,  $\frac{\rho}{\rho-1}=\frac{n}{\mu}+\frac{1}{r}$ and $\frac{n}{\mu}=\frac{n}{\lambda}+\frac{1}{r'}$, where $r'$ denotes the  conjugate exponent of $r$. 

Now we will show that  $u\mapsto \Psi(u):= \mathbf{S}f+\mathcal{T}_{V}(u)+\mathcal{B}(u)$ for $u\in B_{\varepsilon}=\{ u\in {X}^{1,\infty,\infty}_{rq}(\mathbb{R}^{n+1}_+)\, :\, \Vert u\Vert_{{X}^{1,\infty,\infty}_{rq}(\mathbb{R}^{n+1}_+)}\leq \frac{2\varepsilon}{1-L}\}$ is a contraction in  ${X}^{1,\infty,\infty}_{rq}(\mathbb{R}^{n+1}_+)$ such that $\Psi(B_{\varepsilon})\subseteq B_{\varepsilon}$, where $L=c\Vert V\Vert_{\mathcal{M}_{\ell, \infty}^{n}(\mathbb{R}^n)}<1$ and $\Vert f\Vert_{\mathcal{M}_{p\infty}^{\omega}(\mathbb{R}^{n})}\leq \varepsilon/C$  for $\varepsilon>0$ sufficiently small. 
Indeed, from estimates \eqref{lin-contrac} and \eqref{key1-wellp} one has
	\begin{align}
	\Vert \Psi(u)-\Psi(v)\Vert_{{X}^{1,\infty,\infty}_{rq}(\mathbb{R}^{n+1}_+)}&\leq \Vert \mathcal{B}(u)-\mathcal{B}(v)\Vert_{{X}^{1,\infty,\infty}_{rq}(\mathbb{R}^{n+1}_+)} +  \Vert \mathcal{T}_{V}(u)-\mathcal{T}_{V}(v)\Vert_{{X}^{1,\infty,\infty}_{rq}(\mathbb{R}^{n+1}_+)}\notag\\
	&\leq C \left\Vert u-v \right\Vert_{{X}^{1,\infty,\infty}_{rq}(\mathbb{R}^{n+1}_+)} \big(\Vert u\Vert_{{X}^{1,\infty,\infty}_{rq}(\mathbb{R}^{n+1}_+)}^{\rho-1}+\Vert v\Vert_{X}^{\rho-1}\big) +L\Vert u-v\Vert_{{X}^{1,\infty,\infty}_{rq}(\mathbb{R}^{n+1}_+)} \notag\\
	&\leq \Vert u-v\Vert_{{X}^{1,\infty,\infty}_{rq}(\mathbb{R}^{n+1}_+)}  \Big( C \Big(\frac{2\varepsilon}{1-L}\Big)^{\rho-1}+C \Big(\frac{2\varepsilon}{1-L}\Big)^{\rho-1}+L\Big)\notag\\
	&\leq \Big( 2C\Big(\frac{2\varepsilon}{1-L}\Big)^{\rho-1} +L\Big) \Vert u-v\Vert_{X} .\label{key-est-exist}
	\end{align}
Note that \eqref{exactly} is exactly (\ref{ineq-scaling}) with $\omega=n\frac{\rho-1}{\rho}$. Now taking $v=0$, the previous estimate and inequality (\ref{L-est}) yields 
	\begin{align}
	\Vert \Psi(u)\Vert_{{X}^{1,\infty,\infty}_{rq}(\mathbb{R}^{n+1}_+)} &\leq \Vert \mathbf{S}f\Vert_{{X}^{1,\infty,\infty}_{rq}(\mathbb{R}^{n+1}_+)} + \Vert \Psi(u)-\Psi(0)\Vert_{{X}^{1,\infty,\infty}_{rq}(\mathbb{R}^{n+1}_+)}\nonumber\\
	&\leq C \Vert f\Vert_{\mathcal{M}_{p\infty}^{\omega}(\mathbb{R}^{n})} +\Big( 2C\Big(\frac{2\varepsilon}{1-L}\Big)^{\rho-1} +L\Big)\Vert u\Vert_{{X}^{1,\infty,\infty}_{rq}(\mathbb{R}^{n+1}_+)}\nonumber\\
	&\leq \varepsilon + \Big( 2C\Big(\frac{2\varepsilon}{1-L}\Big)^{\rho-1} +L\Big) \frac{2\varepsilon}{1-L}\nonumber\\
	&< \frac{2\varepsilon}{1-L},\nonumber
	\end{align}
where $\Vert f\Vert_{\mathcal{M}_{p\infty}^{\omega}(\mathbb{R}^{n})}\leq \varepsilon/C$ for  $\varepsilon>0$ such that 
$ 2C\Big(\frac{2\varepsilon}{1-L}\Big)^{\rho-1}<\frac{1-L}{2}.$ Therefore,  the map $u\mapsto \Psi(u)$ has a unique fixed point $u\in B_{\varepsilon}\subset {X}^{1,\infty,\infty}_{rq}(\mathbb{R}^{n+1}_+)$ which is {solution} of the integral equation \eqref{key-eq-solve} as we wish to show. \fin

\bigskip 
\noindent\textbf{Proof of Theorem \ref{Symmetries}}: The solution $u$ of  Theorem
\ref{existence} can be seen as a limit in ${X}^{1,\infty,\infty}_{rq}(\mathbb{R}^{n+1}_+)$ of the following Picard sequence:
\begin{equation}\label{Picard}
 u_{1}=\mathbf{S}(f), \,\,\,\,\,\, u_{k+1}=u_1+\mathcal{T}_{V}(u_k)+\mathcal{B}(u_k), \,\,\,\, k\in \mathbb{N}.
\end{equation}
Since $f$ is non-negative in $\partial \mathbb{R}^{n+1}_{+}$ and positive in the measurable set $\mathcal{D}$, then 
\begin{equation}
u_{1}(x,t)=\int_{\partial \mathbb{R}^{n+1}_{+}}G(x-y,t)f(y)dy \,>0 \, \text{ in } \mathbb{R}^{n+1}_{+}.
\end{equation}
Moreover, if $V$ is non-negative on $\partial \mathbb{R}^{n+1}_{+}$ we conclude that $\mathbf{S}(Vu)+\mathbf{S}(b\vert u \vert^{\rho-1}u)$ is also non-negative on $\partial \mathbb{R}^{n+1}_{+}$ whenever that $u$ restricted to $\partial \mathbb{R}^{n+1}_{+}$ is non-negative. By an induction argument one can prove that each element of the sequence $\{u_k\}_k$ is positive.
Note that convergence of $\{u_k\}_k$ in ${X}^{1,\infty,\infty}_{rq}(\mathbb{R}^{n+1}_+)$ imply  $u_k\xrightarrow{k\rightarrow\infty} u$ in $\mathcal{M}_{q\infty}^{\lambda}(\partial \mathbb{R}^{n+1}_{+})$ and $u_k\xrightarrow{k\rightarrow\infty} u$ in $\dot{W}^{1}\mathcal{M}_{r\infty}^{\mu}(\mathbb{R}^{n+1}_+)$, the homogeneous Sobolev space based in weak-Morrey. From the first convergence we get convergence in measure on $\partial \mathbb{R}^{n+1}_{+}$ and from the second (see Theorem \ref{HLS}) we get convergence in measure on  $\mathbb{R}^{n+1}_{+}$. Therefore, there is a subsequence $\{u_{k_j}\}_j$ which converges pointwise to $u$  except a null set in $(\partial \mathbb{R}^{n+1}_{+}, dx)$ or  $(\mathbb{R}_{+}^{n+1}, dxdt)$. Since  $u_{k_j}(x,t)>0$, we conclude that $u(x,t)$ is non-negative a.e. in $\mathbb{R}^{n+1}_+$, but $u$ is \textit{solution} of the integral equation (\ref{key-eq-solve}), then $u=u_1+\mathcal{T}_V(u)+\mathcal{B}(u)\geq u_1>0$ as we wish to show.

\end{document}